\documentclass[11pt]{amsart}
\usepackage[utf8]{inputenc}
\usepackage{amssymb,latexsym}
\usepackage[mathscr]{eucal}
\usepackage{mathtools}

\theoremstyle{plain}
\newtheorem{theorem}{Theorem}
\newtheorem{corollary}{Corollary}
\newtheorem*{conjecture}{Conjecture}
\newtheorem{lemma}{Lemma}

\theoremstyle{remark}
\newtheorem*{heuristics}{Heuristics}


\newcommand{\refL}[1]{Lemma~\ref{L#1}}



\makeatletter
\@namedef{subjclassname@2020}{%
  \textup{2020} Mathematics Subject Classification}
\makeatother


\newcommand{\gre}{\varepsilon}
\newcommand{\gf}{\varphi}
\newcommand{\cp}{\mathcal{P}}


\begin{document}

\title[Heights of Cyclotomic Polynomials]{A Note on Heights of Cyclotomic Polynomials}

\author[G. Bachman]{Gennady Bachman}
\address{Department of Mathematical Sciences\\ University of Nevada Las Vegas\\
4505 Maryland Parkway, Box 454020\\
Las Vegas, Nevada 89154-4020, USA}
\email{gennady.bachman@unlv.edu}
\author[C. Bao]{Christopher Bao}
\email{baoc1@unlv.nevada.edu}
\author[S. Wu]{Shenlone Wu}
\email{wus19@unlv.nevada.edu}

\thanks{We would like to thank the anonymous referee for helping to improve the exposition in this note.}


\begin{abstract}
We show that for any positive integer $h$, either $h$ or $h+1$ is a height of some cyclotomic polynomial $\Phi_n$, where $n$ is a product of three distinct primes.
\end{abstract}

\subjclass[2020]{Primary 11B83; Secondary 11C08}

\keywords{Cyclotomic polynomials, inclusion-exclusion polynomials, heights of polynomials}

\maketitle

\section{Introduction}

Let $\Phi_n$ be the $n$th cyclotomic polynomial and write
\[
\Phi_n(x)=\prod_{\substack{1\le a<n\\(a,n)=1}}\bigl(x-e^{2\pi ia/n}\bigr)=\sum_{0\le m\le\gf(n)}a_mx^m \qquad[a_m=a_m(n)],
\]
where $\gf$ is the Euler's totient function. This note is concerned with the coefficients of these polynomials, which the reader will recall are integral, and specifically with the function 
\[
A(n)=\max_m|a_m(n)|.
\]
Sometimes one refers to $A(n)$ as the height of the polynomial $\Phi_n$. The study of coefficients of cyclotomic polynomials began with the curious observation that for small $n$, the nonzero coefficients of $\Phi_n$ seemed to be always $\pm1$. Indeed, $A(n)=1$ for all $1\le n\le104$, but $A(105)=2$. Today, there is a considerable body of literature devoted to the study of these coefficients and especially the function $A(n)$ (see \cite{Bz1,Bz2} and references therein).

Amongst many easy properties of polynomials $\Phi_n$ is the fact that to study their coefficients it suffices to restrict $n$ to be odd and squarefree (see, for example, \cite[Section~1]{Le}). With this restriction in place, it turns out that the number of prime factors of $n$, $\omega(n)$, is an important parameter in this problem. In particular, when $n=p$ is prime or $n=pq$ is a product of two (odd) primes, it is easy to show that $A(p)=A(pq)=1$ (see \cite{Le}). Since $n=105$ is the smallest odd integer with $\omega(n)>2$, the equality $A(n)=1$, for $n\le104$, becomes fully explained. Moreover, we see that from our current perspective the two cases $\omega(n)=1$ or 2 are considered trivial. Parenthetically, the first of these is simply
\[
\Phi_p(x)=\frac{x^p-1}{x-1}=1+x+x^2+\dots+x^{p-1},
\]
but coefficients of polynomials $\Phi_{pq}(x)$ continue to offer interesting and even difficult challenges (see \cite{Le} and \cite{Fo}).

For $\omega(n)>2$, to determine $A(n)$ is far more challenging. For one thing, $A(n)$ is no longer a function of just $\omega(n)$, but rather a proper function of $n$. Furthermore, $A(n)$'s true dependence on $n$ is quite mysterious. In a letter to Landau, Schur showed that $A(n)$ is not bounded (see \cite{Leh} or \cite {Le}). Later Suzuki \cite{Su} pointed out that Schur's argument shows that every integer occurs as a coefficient of some polynomial $\Phi_n$. It is then natural to ask if every positive integer occurs as a height of some polynomial $\Phi_n$. In other words, for any positive integer $h$, is the equation
\begin{equation}\label{a1}
A(n)=h
\end{equation}
is soluble in $n$?

This question was tackled by Kosyak, Moree, Sofos and Zhang in their recent paper \cite{KMSZ}. Earlier work of Moree and Ro\c{s}u \cite{MR} generates a range of integers $I_h$ depending on $h$, such that if $I_h$ contains a prime number $p$ then we can give a solution to \eqref{a1} using this $p$. Roughly, this approach requires for there to be a prime $p$ in the interval
\begin{equation}\label{a2}
2h-1-\sqrt{2h-1}<p\le2h-1.
\end{equation}
This requirement reminds one of a famous conjecture of Legendre that there is always a prime between consecutive squares. Despite tremendous recent progress in the theory of distribution of primes, the requirement \eqref{a2} is well out of reach. But recent advances by Heath-Brown \cite{HB} allow Kosyak et al to conclude that \eqref{a2}, whence the equation \eqref{a1}, are satisfied for almost all $h$. More precisely, they conclude that as $x$ goes to infinity, the number of ``bad'' $h\le x$ is $O_\gre(x^{3/5+\gre})$.

Using a different approach, we compliment the work of Kosyak et al, by showing that given any two consecutive positive integers, at least one of them must be a height of some cyclotomic polynomial. Our approach does not require any deep state of the art distribution of primes results, such as \eqref{a2}, and when it comes to that, it rests entirely on Dirichlet's theorem for primes in arithmetic progressions---the absolute minimum and indispensable tool in this business. In fact, our construction of solutions to \eqref{a1} can be achieved with such a sparse subset of primes $\cp$ that $\cp$ has hardly any primes in it at all. The precise statement of our results and observations are given in Sections 2 and 3, after a discussion of relevant background material.

\section{Background and Results}

It seems that the first ``theoretical'' evaluation of $A(n)$ for $\omega(n)>2$ (as opposed to explicitly calculating coefficients of $\Phi_n$ for particular values of $n$) was the following result of Bachman \cite{Ba1}.

\begin{lemma}\label{L1}
Let $p$ be an arbitrary odd prime and let primes $q>p$ and $r>pq$ satisfy the congruences
\[
q\equiv2\pmod p\quad\text{and}\quad r\equiv\frac{pq\pm1}2\pmod{pq}.
\]
Then $A(pqr)=\frac{p+1}2$.
\end{lemma}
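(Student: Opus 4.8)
The plan is to reduce the computation of $A(pqr)$ to a bound on partial sums of an explicit $\{-1,0,1\}$‑valued sequence built from the (well–understood) coefficients of $\Phi_{pq}$, and then to let the two congruence hypotheses supply exactly the cancellation that produces the constant $(p+1)/2$.

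First, a coefficient formula. Since $r\nmid pq$, the standard identity $\Phi_{pq}(x^r)=\Phi_{pqr}(x)\Phi_{pq}(x)$ gives $\Phi_{pqr}(x)=\Phi_{pq}(x^r)/\Phi_{pq}(x)$, and
\[
\frac1{\Phi_{pq}(x)}=\frac{(1-x^p)(1-x^q)}{(1-x)(1-x^{pq})}=\Bigl(\sum_{a=0}^{p-1}x^a\Bigr)\sum_{k\ge0}\bigl(x^{pqk}-x^{pqk+q}\bigr)=\sum_{\ell\ge0}d_\ell x^\ell,
\]
where $d_\ell$ depends only on $\ell\bmod pq$: it equals $+1$ for $\ell\bmod pq\in\{0,\dots,p-1\}$, $-1$ for $\ell\bmod pq\in\{q,\dots,q+p-1\}$, and $0$ otherwise. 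Writing $c_j:=c_j(pq)$ for the coefficients of $\Phi_{pq}$, comparison of coefficients gives $a_m=\sum_{j\ge0}c_j\,d_{m-jr}$ (with $d_\ell:=0$ for $\ell<0$), and since $d_\ell-d_{\ell-pq}$ vanishes once $\ell\ge pq$, telescoping in steps of $pq$ (here one uses $pq<r$) collapses this to
\[
a_m=\sum_{J=0}^{\lfloor m/r\rfloor}c_J\, d_{(m_0-Jr)\bmod pq},\qquad m_0:=m\bmod pq,
\]
valid for $0\le m\le\gf(pqr)$. So it suffices to bound every partial sum $\sum_{J=0}^{K}c_J\,d_{(m_0-Jr)\bmod pq}$ in absolute value by $(p+1)/2$ and to exhibit a mid‑range $m$ attaining it.

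Second, the shape of $\Phi_{pq}$. By the Lam--Leung theorem, writing $\rho p+\sigma q=\gf(pq)$ with $0\le\rho\le q-1$ and $0\le\sigma\le p-1$, one has $c_J=1$ exactly on the box $S_+=\{ap+bq:0\le a\le\rho,\ 0\le b\le\sigma\}$, $c_J=-1$ exactly on $S_-=\{ap+bq-pq:\rho<a\le q-1,\ \sigma<b\le p-1\}$, and $c_J=0$ otherwise, all such representations being unique. The hypothesis $q\equiv2\pmod p$ pins down $\sigma=(p-1)/2$ and $\rho=(q-2)(p-1)/(2p)$, so $S_+$ has $b$‑width $(p+1)/2$ and $S_-$ has $b$‑width $(p-1)/2$; moreover $q\equiv2\pmod p$ makes the support condition for $d$, read through the Chinese Remainder Theorem, take the clean form: with $\gamma=\ell\bmod q$, one has $d_\ell=+1\iff\gamma<p$ and $\ell\equiv\gamma\pmod p$, and $d_\ell=-1\iff\gamma<p$ and $\ell\equiv\gamma+2\pmod p$.

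Third, the congruence on $r$ and the conclusion. The hypothesis $r\equiv\frac{pq\pm1}2\pmod{pq}$ is $2r\equiv\pm1\pmod{pq}$, hence $2r\equiv\pm1$ modulo each of $p,q$, and with $q\equiv2$ also $qr\equiv\pm1\pmod p$. Substituting $J=ap+bq$ into $d_{(m_0-Jr)\bmod pq}$ and splitting via CRT, the residue mod $q$ depends only on $a$ (through $pr$) and the residue mod $p$ depends linearly on $b$ (through $qr\equiv\pm1$); hence for each fixed $a$ the inner sum over $b$ telescopes — as $b$ runs over its interval, $(m_0-bqr)\bmod p$ runs through an interval $B\subseteq\mathbb Z/p$ of $(p+1)/2$ (resp.\ $(p-1)/2$) residues, and the signed count $\mathbf 1[\,\cdot\in B\,]-\mathbf 1[\,\cdot+2\in B\,]$ is nonzero only at the two residues at each end of $B$. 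Summing over the rows $a$, while carefully tracking how the cutoff $J\le K$ truncates the two boxes, yields $|a_m|\le(p+1)/2$ for every $m$; equality is then obtained for an $m$ chosen so that the cutoff slices one box along a full short side.

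The step I expect to be the real obstacle is the last one: it is a two‑modulus counting problem — one is intersecting the Lam--Leung box, the pulled‑back support of $d$, and the triangle $\{J\le K\}$ — and it is precisely $2r\equiv\pm1\pmod{pq}$ that forces the pulled‑back support of $d$ to be short arithmetic progressions commensurate with the box, so that a signed count of a priori size $\asymp q$ collapses to the constant $(p+1)/2$. Getting that collapse exact at the boundaries of both the box and the triangle, handling the two sign choices $\pm$ separately, and checking the genuinely small case $p=3$, is where nearly all of the labor lies; by contrast the first two steps are routine manipulation with classical identities.
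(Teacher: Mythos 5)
Your setup is sound and, for the record, it is not the route the paper takes: the paper offers no proof of this lemma at all, simply citing Bachman's original article \cite{Ba1}, so what you are really reconstructing is the literature's standard machinery (the expansion of $\Phi_{pq}(x^r)/\Phi_{pq}(x)$, i.e.\ Kaplan-style reduction, combined with the Lam--Leung description of the coefficients of $\Phi_{pq}$). Steps one and two are correct as stated: the formula $a_m=\sum_{0\le J\le m/r}c_J\,d_{(m_0-Jr)\bmod pq}$ holds (in fact you only need periodicity of $d_\ell$ for $\ell\ge0$ and its vanishing for $\ell<0$, not $r>pq$), the values $\sigma=(p-1)/2$, $\rho=(p-1)(q-2)/(2p)$ are right, and the CRT reformulation of the support of $d$ under $q\equiv2\pmod p$ and $2r\equiv\pm1\pmod{pq}$ is correct.

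The genuine gap is exactly the step you flag and then skip: the passage from ``each row $a$ contributes an element of $\{-1,0,1\}$'' to $|a_m|\le(p+1)/2$, together with the equality case. Your row-by-row analysis only bounds $|a_m|$ by the number of rows $a$ satisfying the mod-$q$ condition $\gamma_a=(m_0-apr)\bmod q<p$; since $r\equiv\pm2^{-1}\pmod q$, as $a$ runs over the roughly $q/2$ rows of each Lam--Leung box this condition is met about $p/2$ times per box, so the argument as written yields only a bound of order $p$, not $(p+1)/2$. Halving that bound requires an additional cancellation or pairing argument between the active rows of the positive and negative boxes (with exact bookkeeping at the boundaries of both boxes and of the truncation $J\le\lfloor m/r\rfloor$), and this is where the hypotheses are really consumed; nothing in the proposal indicates how that pairing goes. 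Likewise, the lower bound is only asserted: exhibiting an $m$ with $|a_m|=(p+1)/2$ requires choosing $m_0$ and the cutoff so that $(p+1)/2$ rows simultaneously pass the mod-$q$ test and each contribute $+1$ (and no row contributes $-1$), which must be verified, not just described as ``slicing one box along a full short side.'' As it stands the proposal is a correct and workable plan whose central computation --- the only part that actually proves $A(pqr)=\frac{p+1}{2}$ --- is missing.
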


We say that $\Phi_n$ is a ternary cyclotomic polynomial if $n=pqr$ is a product of three distinct odd primes. Another result of Bachman \cite{Ba2} gave the first targeted evaluation of $A(n)$, an evaluation designed to produce a particular height $h$. He showed that if $p$ is an arbitrary prime and primes $q$ and $r$ satisfy
\[
q\equiv-1\pmod p\quad\text{and}\quad r\equiv1\pmod{pq},
\]
then $A(pqr)=1$. This result was greatly generalized by Kaplan \cite{Ka1} who showed that the condition $r\equiv\pm1\pmod{pq}$ alone was sufficient to guarantee that $A(pqr)=1$. This was followed by Moree and Ro\c{s}u's generalization of \refL{1} \cite{MR}.

\begin{lemma}\label{L2}
Let $p$ be an odd prime and let $x_p$ be the largest of the two zeros of the polynomial $4x^2+2x+3-p$. Then for any integer $h$ in the interval
\[
\frac{p+1}2\le h\le\frac{p+1}2+x_p,
\]
the equation $A(pqr)=h$ is soluble in primes $q$ and $r$. Moreover, one can certainly take $q$ to be any sufficiently large prime in a certain residue class $q\equiv a_h\pmod p$, and then take $r$ to be any sufficiently large prime with respect to $pq$ in a certain residue class $r\equiv b_h\pmod{pq}$.
\end{lemma}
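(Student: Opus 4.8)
The plan is to convert the computation of the coefficients of $\Phi_{pqr}$ into a finite signed counting problem that depends only on $p$, on $q\bmod p$, and on $r\bmod pq$, and then to tune these two residues so that the resulting maximum equals the prescribed $h$. The starting point is the identity $\Phi_{pqr}(x)=\Phi_{pq}(x^r)/\Phi_{pq}(x)$, which is valid because $r\nmid pq$. Write $\Phi_{pq}(x)=\sum_i c_ix^i$ with $c_i\in\{-1,0,1\}$; by Lam--Leung the $+1$'s and the $-1$'s occupy two rectangular blocks of positions whose dimensions are controlled by the inverses of $q\bmod p$ and of $p\bmod q$. Expanding the power series $1/\Phi_{pq}(x)=\frac{(1-x^p)(1-x^q)}{(1-x)(1-x^{pq})}=\sum_{j\ge0}d_jx^j$, whose coefficients $d_j$ depend only on $j\bmod pq$ and are $+1$ on one interval of length $p$, $-1$ on another, and $0$ elsewhere, one obtains the closed formula
\[
a_m(pqr)=\sum_{\substack{i\ge0\\ ir\le m}}c_i\,d_{m-ir}.
\]
Once the supports are recorded precisely, this exhibits $a_m(pqr)$ as the number of points of the $+1$-block, minus the number of points of the $-1$-block, that are carried by $i\mapsto(m-ir)\bmod pq$ into the two short $d$-intervals.

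Next I would choose the residue class $a_h$ of $q$ modulo $p$ and the residue class $b_h$ of $r$ modulo $pq$. These should form a controlled perturbation of the configuration behind \refL{1}, which is precisely the choice $q\equiv2\pmod p$ and $r\equiv(pq\pm1)/2\pmod{pq}$ making the extremal count equal to $(p+1)/2$; the perturbation is to be engineered so that the count comes out exactly $(p+1)/2+k$, where $k:=h-(p+1)/2\ge0$. One then checks a stabilization phenomenon: once $q$ is large in the class $a_h$ and, afterwards, $r$ is large with respect to $pq$ in the class $b_h$, the constraint $ir\le m$ above reduces --- on writing $m=i_0r+s$ with $0\le s<r$ --- to $i\le i_0$, so that $\max_m|a_m(pqr)|$ becomes a function of $p$, $a_h$ and $b_h$ alone. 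This is what makes the conclusion of the lemma uniform in $q$ and $r$.

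It then remains to establish two inequalities. The lower bound $A(pqr)\ge h$ asks only for a single index $m$ --- essentially the centre of the relevant window --- at which the blocks line up to give a signed count equal to $h$, and is the easy half. The upper bound $A(pqr)\le h$ --- that over \emph{every} $m$ no placement of the two $c$-blocks against the two $d$-intervals produces a larger signed count --- is the crux, and I expect it to be the main obstacle. It reduces to a bound, uniform in $m$, for the number of $+1$-positions landing in the $d$-support minus the number of $-1$-positions landing there, and this is where the quadratic enters: the admissible perturbation occupies a region whose size is quadratic in $k$, so the construction can be completed only when $p$ dominates that region; carrying out the extremal count pins the threshold down to $p\ge4k^2+2k+3$, which is exactly the condition $k\le x_p$, i.e.\ the stated range $(p+1)/2\le h\le(p+1)/2+x_p$.

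Finally one checks that $a_h$ is coprime to $p$ and $b_h$ to $pq$, so that primes may lie in these classes, and Dirichlet's theorem on primes in arithmetic progressions then produces infinitely many primes $q\equiv a_h\pmod p$ and, for each such $q$, infinitely many primes $r\equiv b_h\pmod{pq}$; taking $q$ and then $r$ large enough to fall in the stable range yields $A(pqr)=h$. Beyond Dirichlet's theorem no information on the distribution of primes is used; the substance of the argument is the uniform upper bound above, everything else being bookkeeping with the Lam--Leung description of $\Phi_{pq}$.
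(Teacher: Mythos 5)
First, a point of comparison: this note does not prove Lemma~\ref{L2} at all --- it is quoted from Moree and Ro\c{s}u \cite{MR} --- so the only thing to measure your proposal against is their argument, and on its own terms your text is an outline rather than a proof. The framework you set up is indeed the standard and correct starting point: $\Phi_{pqr}(x)=\Phi_{pq}(x^r)/\Phi_{pq}(x)$, the Lam--Leung block description of $\Phi_{pq}$, the $pq$-periodic expansion of $1/\Phi_{pq}$, and the resulting convolution formula for $a_m(pqr)$ are all right, and your arithmetic identifying the condition $p\ge 4k^2+2k+3$ with $k\le x_p$ is correct. But everything that makes the lemma a theorem is deferred: you never specify the residue classes $a_h\pmod p$ and $b_h\pmod{pq}$ (``the perturbation is to be engineered so that the count comes out exactly $(p+1)/2+k$''), and the exact evaluation for that choice --- above all the upper bound $A(pqr)\le h$ uniformly in $m$, which you yourself call the crux --- is asserted (``carrying out the extremal count pins the threshold down'') rather than carried out. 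That extremal count, built on the configuration of \cite{Ba1} (Lemma~\ref{L1} here), is precisely the substance of \cite{MR}; without it the proposal proves nothing beyond the trivial reductions.

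There is also a specific claim in your stabilization step that is unjustified and, as stated, stronger than what is true or needed. You assert that once $q$ and then $r$ are large, $\max_m|a_m(pqr)|$ ``becomes a function of $p$, $a_h$ and $b_h$ alone.'' The invariance in $r$ modulo $pq$ is Kaplan's theorem, i.e.\ \eqref{c2} of Lemma~\ref{L3}, but uniformity in $q$ within a residue class modulo $p$ does not follow from your reduction of the constraint $ir\le m$: the Lam--Leung data for $\Phi_{pq}$ (your $c_i$, and likewise the supports of the $d_j$) depend on the inverse of $p$ modulo $q$, hence on $q$ itself and not merely on $q\bmod p$. In the known constructions (Lemma~\ref{L4} here, and \cite{MR}) the $q$-uniformity is not a generality but a consequence of the particular classes chosen, and establishing it is part of the work. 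So the proposal should be regarded as a plausible roadmap that matches the literature's approach, with the two essential components --- the explicit classes and the uniform extremal bound --- still missing.
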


This pretty much sums up what is known about evaluating $A(pqr)$. Virtually nothing is known about evaluations of $A(n)$ with $\omega(n)>3$ (but see \cite{Ka2}). It should now be clear that the basic height problem of solving $A(n)=h$, should be reinterpreted as the problem of solving this equation with $\omega(n)=3$. The true crux of the matter is the question of solubility of
\[
\text{$A(n)=h$ in $n$, with $\omega(n)=k$ fixed}.
\]
For $k>3$ this appears to be far more challenging, and one expects the difficulty to go up with $k$. A particularly appealing special case is $h=1$. After all, the study of coefficients of cyclotomic polynomials begun with the question of whether $A(n)=1$ for all $n$.

Returning to the simplest case of our problem, $\omega(n)=3$, let us reiterate that the results of Kosyak et al discussed in the introduction were for this case and based on \refL{2}. In the present paper we compliment these results as follows.

\begin{theorem}
Let $h$ be a given positive integer and put $p'=2h-1$. If a triple of odd primes $p,q,r$ satisfies the congruences
\begin{equation}\label{b1}
q\equiv2\pmod{p'},\quad r\equiv\frac{p'q+1}2\pmod{p'q},\quad p\equiv p'\pmod{qr},
\end{equation}
then
\begin{equation}\label{b2}
A(pqr)=h\text{ or }h+1.
\end{equation}
\end{theorem}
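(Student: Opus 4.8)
The plan is to evaluate $A(pqr)$ by comparing $\Phi_{pqr}$ with the inclusion–exclusion polynomial $Q:=Q_{\{p',q,r\}}$, for which two facts will be shown: that its height $A(Q):=\max_j\bigl|[x^{j}]Q\bigr|$ equals $h$, and that $A(Q)\le A(pqr)\le A(Q)+1$. Together these give \eqref{b2}.

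First I would record the bookkeeping. From $q\equiv 2\pmod{p'}$ and $p'$ odd we get $(q,p')=1$, and from $2r\equiv p'q+1\pmod{p'q}$ we get $(r,p')=(r,q)=1$, so $p',q,r$ are pairwise coprime; hence $\Phi_{pqr}$ and $Q$ are both defined, and since $q,r$ are prime while $p\equiv p'\pmod{qr}$ is prime to $qr$,
\[
\Phi_{pqr}(x)=\frac{\Phi_{qr}(x^{p})}{\Phi_{qr}(x)},\qquad Q(x)=\frac{\Phi_{qr}(x^{p'})}{\Phi_{qr}(x)}.
\]
Thus both polynomials have the form $\Phi_{qr}(x^{m})\cdot g(x)$ with $g(x)=1/\Phi_{qr}(x)=(x^{q}-1)(x^{r}-1)/\bigl((x^{qr}-1)(x-1)\bigr)$; here $\Phi_{qr}$ is flat, and $g$ is a power series whose coefficients lie in $\{0,\pm1\}$ and are periodic with period $qr$ beyond a bounded initial segment.

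For $A(Q)=h$ I would re-run Bachman's proof of \refL{1}: the congruences $q\equiv 2\pmod{p'}$ and $r\equiv\tfrac{p'q+1}{2}\pmod{p'q}$ are exactly its hypotheses with the odd prime replaced by $p'$, and, once that computation is phrased for $Q=\Phi_{qr}(x^{p'})/\Phi_{qr}(x)$, the primality of $p'$ is never used, only that $p'$ is odd and coprime to $q$ and $r$; so it delivers $A(Q)=\tfrac{p'+1}{2}=h$. For the two-sided comparison, write $\Phi_{qr}(x)=\sum_m c_m x^m$ and $g(x)=\sum_j b_j x^j$, so that $[x^{N}]\Phi_{pqr}=\sum_m c_m b_{N-pm}$ and $[x^{N}]Q=\sum_m c_m b_{N-p'm}$. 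Since $p\equiv p'\pmod{qr}$, the shifts $N-pm$ and $N-p'm$ are congruent modulo the period of $g$, so term by term the two convolutions agree wherever $g$ is already in its periodic regime. I expect this to give both $A(pqr)\ge A(Q)$, by exhibiting a coefficient of $\Phi_{pqr}$ that provably equals an extremal coefficient of $Q$, and $A(pqr)\le A(Q)+1$, the loss of one unit being precisely what forces the ``$h$ or $h+1$'' alternative rather than ``$h$''.

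The main obstacle is this boundary analysis. Near each block $N\approx p'm$ of the convolution there sit a few indices $m$ for which $g$ has not yet become periodic, and a priori each such $m$ can move $[x^{N}]\Phi_{pqr}$ away from $[x^{N}]Q$ by one, so the crude estimate is much weaker than ``$+1$''. What has to be pushed through is that, after the cancellations imposed by the flatness of $\Phi_{qr}$ and by the particular residues in \eqref{b1}, the net distortion at any single degree is at most $1$, and is never enough to drop a coefficient below $h$ in absolute value at the degree used for the lower bound. Everything else — the coprimality checks, the two factorization identities, the re-derivation of $A(Q)=h$, and producing primes $p,q,r$ obeying \eqref{b1} — is routine given \refL{1} and Dirichlet's theorem on primes in arithmetic progressions.
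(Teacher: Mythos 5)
Your overall strategy coincides with the paper's: compare $\Phi_{pqr}$ with the ternary inclusion--exclusion polynomial $Q_{\{p',q,r\}}$, show its height is $h$ (the congruences in \eqref{b1} being exactly those of \refL{1} with the prime replaced by the odd number $p'=2h-1$), and then transfer this to $\Phi_{pqr}$ via a two-sided comparison $A(p',q,r)\le A(pqr)\le A(p',q,r)+1$ coming from $p\equiv p'\pmod{qr}$. The paper does precisely this, but both ingredients are quoted results: the first is \refL{4} (the inclusion--exclusion generalization of \refL{1}), and the second is \eqref{c3} of \refL{3}, which is the theorem of Bachman and Moree in \cite{BM}.

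The genuine gap is that you do not prove the comparison step, and it is the entire non-routine content of the theorem once \refL{4} is granted. Your convolution setup ($[x^N]\Phi_{pqr}=\sum_m c_m b_{N-pm}$ versus $[x^N]Q=\sum_m c_m b_{N-p'm}$, with $b_j$ periodic mod $qr$ for $j\ge0$ but vanishing for $j<0$, and the degrees of the two polynomials differing) is a reasonable starting point, but, as you yourself concede, the naive term-by-term estimate allows the two coefficients to drift apart by far more than $1$, and you only state as a hope (``I expect this to give\dots'', ``What has to be pushed through is\dots'') both the lower bound $A(pqr)\ge A(Q)$ and the upper bound $A(pqr)\le A(Q)+1$. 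Controlling that boundary distortion is exactly what the Bachman--Moree paper \cite{BM} is devoted to; it is not a finishing touch but the crux, and in its absence the conclusion \eqref{b2} does not follow. A secondary, smaller gap: your assertion that Bachman's proof of \refL{1} ``never uses'' the primality of $p$ is plausible and is indeed the content of \refL{4}, but you assert it without verification; the clean fix for both points is simply to invoke \refL{4} and \eqref{c3} (checking their hypotheses, e.g.\ that $p>\max(q,r)>p'$ when $p\ne p'$), which is precisely the paper's one-paragraph proof.
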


A few remarks are in order here. First, the existence of primes specified in \eqref{b1} is guaranteed by Dirichlet's theorem for primes in arithmetic progressions (see, for example, \cite[Chapter~7]{Ap}). Second, $h=1$ gives $p'=1$ and $p\equiv1\pmod{qr}$. In this case we always get $A(pqr)=1$, by the above mentioned result of Kaplan. Third, if $p'=2h-1$ happens to be prime, then $A(p'qr)=h$, by \refL{1} (and taking $p=p'$ in \eqref{b1} is one of the options). Finally, the conclusion \eqref{b2} is fully valid in the sense that both cases of equality do occur. This is so even when $p'$ is prime: there are examples where
\[
p\equiv p'\pmod{qr}\implies A(pqr)=A(p'qr)=h,
\]
and examples where
\[
p\equiv p'\pmod{qr},\ p>p' \implies A(pqr)=A(p'qr)+1=h+1.
\]
We shall briefly return to this point in the next section, where we will see that it leads to some rather interesting questions.

Unlike the requirement \eqref{a2}, our solutions to \eqref{b2} are not built around a set of prime numbers contained in a short interval determined by the target $h$. Instead, taking $p>p'$ in \eqref{b1}, even if $p'$ is a prime, we see that our solution to \eqref{b2} works with $\min(p,q,r)$ arbitrarily large. In fact, we can make this point much stronger by showing that one can solve \eqref{b2} in primes restricted to a sparse subset $\cp\subset\mathbb{P}$ of the set of prime numbers. To that end, let us introduce the counting function $P(x)$, which gives the number of elements of $\cp$ not exceeding $x$. We shall give the following example.

\begin{corollary}
There is a subset of primes $\cp$ with $P(x)<\log x$, such that, for every $h$, one can solve \eqref{b2} in arbitrarily large primes $p,q,r\in\cp$.
\end{corollary}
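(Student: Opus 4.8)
The plan is to construct $\cp$ greedily, adding finitely many primes at a time, and to exploit the fact that the congruences \eqref{b1} are \emph{triangular}: the condition on $q$ involves only $h$, the condition on $r$ involves only $h$ and $q$, and the condition on $p$ involves only $h$, $q$ and $r$. Consequently, given a target $h$ (so $p'=2h-1$) and \emph{any} finite set of primes already chosen, one can satisfy \eqref{b1} by three successive appeals to Dirichlet's theorem: first pick a prime $q\equiv2\pmod{p'}$, then a prime $r\equiv(p'q+1)/2\pmod{p'q}$, then a prime $p\equiv p'\pmod{qr}$. In each case the residue is coprime to the modulus --- a routine check, using only that $p'$, and hence the chosen $q$, are odd --- so each progression contains infinitely many primes; and since at most one prime in a progression is even, at each step one may also require the new prime to be odd and larger than any bound one pleases (taking the three successively larger forces them distinct and distinct from the primes chosen earlier). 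By the Theorem, every triple $(p,q,r)$ produced this way satisfies $A(pqr)\in\{h,h+1\}$, i.e.\ solves \eqref{b2}.

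Next I would enumerate the countable set of ``tasks'' $(h,j)$ with $h,j\ge1$ in a single sequence $(h_1,j_1),(h_2,j_2),\dots$ (diagonal enumeration), the counter $j$ serving only to force a return to each $h$ infinitely often. Then build $\emptyset=\cp_0\subsetneq\cp_1\subsetneq\cp_2\subsetneq\cdots$ by processing the tasks in order: at stage $t$, run the three-step procedure above with $h=h_t$, choosing the new primes $q,r,p$ (in that order, each larger than all primes chosen so far) so large that all three exceed $e^{3t+3}$, and set $\cp_t=\cp_{t-1}\cup\{q,r,p\}$. Finally put $\cp=\bigcup_t\cp_t$.

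It then remains to read off the two properties. For sparsity: every prime added exceeds all its predecessors, so $|\cp_t|=3t$ and, writing $\cp=\{b_1<b_2<b_3<\cdots\}$, the element $b_k$ was added at a stage $t$ with $3t\ge k$, whence $b_k>e^{3t+3}>e^{k}$; then for $b_k\le x<b_{k+1}$ one has $P(x)=k<\log b_k\le\log x$, while $P(x)=0<\log x$ for $1<x<b_1$, so $P(x)<\log x$ for all $x>1$. For the ``arbitrarily large'' clause: for fixed $h$ the task $(h,j)$ is processed for every $j$, at stages $t\to\infty$, and the corresponding triple consists of primes each exceeding $e^{3t+3}$; so for any $N$ some such triple has all of $p,q,r$ exceeding $N$, with $p,q,r\in\cp$.

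There is no serious obstacle here: the argument needs only Dirichlet's theorem and careful bookkeeping. The one point requiring genuine (if minor) care is the \emph{consistency} of the three congruences in \eqref{b1} when $p$, $q$ and $r$ must all be drawn from the single set $\cp$ under construction; this is exactly what the triangular structure buys us, letting the congruences be met one prime at a time rather than simultaneously, and it is also what lets the bound $P(x)<\log x$ be met with enormous room to spare, simply by taking each new prime astronomically larger than its predecessors.
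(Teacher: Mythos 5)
Your proposal is correct, but it proceeds by a genuinely different construction than the paper's. You build $\cp$ greedily: enumerate the tasks $(h,j)$ diagonally and, at each stage, use Dirichlet's theorem three times in succession (exploiting the triangular structure of \eqref{b1}, with the coprimality checks you allude to being indeed automatic: $q\mid p'$ together with $p'\mid q-2$ would force $q\mid2$, and $r\mid p'q$ together with $2r\equiv1\pmod{p'q}$ would force $r\mid1$) to adjoin a fresh triple $q,r,p$ exceeding $e^{3t+3}$; sparsity and the ``arbitrarily large'' clause then follow from the bookkeeping you describe. The paper instead isolates abstract sufficient conditions on $\cp$ (\refL{5}, properties (P1)--(P3): primes of $\cp$ in the classes $2$ and $\frac{m+1}2$ modulo \emph{every} odd $m$, and in the class $a$ modulo $qr$ for every $a$ and all sufficiently large $q,r\in\cp$) and then constructs one fixed, task-independent set satisfying them, using primorial-type moduli $\overline{2n+1}$ and products of consecutive primes $P(k,k+n)$ so that a single sparse sequence covers all relevant moduli at once; the height-$h$ triples are then extracted from this universal set afterwards. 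Both arguments rest only on Dirichlet's theorem plus rapid growth to get $P(x)<\log x$. Your route is somewhat simpler and more directly tied to the statement being proved, since the congruences are met one task at a time; the paper's route yields a stronger, reusable object (a single $\cp$ with residue-coverage properties independent of any enumeration) and a clean separation between the sufficient conditions (\refL{5}) and their realization. The only cosmetic slip in your write-up is the phrase suggesting that oddness of $q$ follows from oddness of $p'$ --- it does not, but you impose oddness of each new prime explicitly, so nothing is lost.
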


This leaves little room for doubt that every positive integer is a height of some $\Phi_{pqr}$. In the next section, after reviewing the concept of inclusion-exclusion polynomials, we give another heuristic argument which makes this even clearer. In fact, we make the following conjecture.

\begin{conjecture}
For an odd prime $p$, let
\[
M(p)\coloneqq\max_{q,r}A(pqr).
\]
Then for every $p$ and $1\le h\le M(p)$, the equation
\[
A(pqr)=h
\]
is soluble in primes $q$ and $r$.
\end{conjecture}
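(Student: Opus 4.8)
The plan is to recast the statement in the language of inclusion--exclusion polynomials and then to imitate the proof of \refL{1}, the one genuinely new point being that the congruence $p\equiv p'\pmod{qr}$ lets us trade $p$ for $p'$ at the cost of a single boundary term. For a prime $p$ with $p\nmid qr$ one has the standard identity $\Phi_{pqr}(x)=\Phi_{qr}(x^{p})/\Phi_{qr}(x)$; writing $Q_{\{n_{1},n_{2},n_{3}\}}$ for the inclusion--exclusion polynomial of a triple of pairwise coprime integers, one likewise has $Q_{\{p',q,r\}}(x)=\Phi_{qr}(x^{p'})/\Phi_{qr}(x)$, and this is simply $\Phi_{p'qr}$ when $p'=2h-1$ happens to be prime. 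One checks from \eqref{b1} that $\gcd(p',qr)=1$, so that $Q_{\{p',q,r\}}$ is a genuine inclusion--exclusion polynomial, and that the argument of \refL{1} in \cite{Ba1} (which uses only the Lam--Leung/Lenstra description of $\Phi_{p'q}$ and the flatness of binary inclusion--exclusion polynomials, neither requiring primality of $p'$) applies to it and yields $A\bigl(Q_{\{p',q,r\}}\bigr)=\frac{p'+1}{2}=h$. Finally we may assume $p>p'$, so that $p=\max(p,q,r)$; the remaining case $p=p'$ (possible only for $p'$ prime) is covered by \refL{1} directly.

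The comparison of $\Phi_{pqr}$ with $Q_{\{p',q,r\}}$ is explicit. With $c_{j}=[x^{j}]\Phi_{qr}$ and $p=p'+kqr$, the polynomial $\Phi_{qr}(x^{p})-\Phi_{qr}(x^{p'})=\sum_{j}c_{j}(x^{pj}-x^{p'j})$ is divisible by $x^{qr}-1$, hence by $\Phi_{qr}(x)$, and using $\frac{x^{qr}-1}{\Phi_{qr}(x)}=\frac{(x^{q}-1)(x^{r}-1)}{x-1}$ one finds
\[
\Phi_{pqr}(x)-Q_{\{p',q,r\}}(x)=\frac{(x^{q}-1)(x^{r}-1)}{x-1}\,R(x),\qquad R(x)=\sum_{j}c_{j}\sum_{t=0}^{kj-1}x^{p'j+qrt}.
\]
Since $\gcd(p',qr)=1$ and $\deg\Phi_{qr}<qr$, every exponent admits at most one contributing pair $(j,t)$, so $R$ is flat; but multiplying by $(x^{q}-1)(x^{r}-1)/(x-1)$ does not preserve flatness, so one cannot merely bound this difference --- one must track its interference with $Q_{\{p',q,r\}}$. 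Concretely I would expand $a_{m}(\Phi_{pqr})$ using $\Phi_{pqr}(x)=\Phi_{qr}(x^{p})/\Phi_{qr}(x)$ together with the explicit $\{0,\pm1\}$-valued power series of $1/\Phi_{qr}$, which has period $qr$; this renders $a_{m}(\Phi_{pqr})$ a signed count of lattice points in a region determined by $m,p,q,r$, and because $p\equiv p'\pmod{qr}$ the only effect of dilating by $p$ rather than $p'$ is to lengthen one boundary strip of that region. Feeding the congruences $q\equiv2\pmod{p'}$ and $r\equiv\frac{p'q+1}{2}\pmod{p'q}$ into the lattice-point estimates of \cite{Ba1,MR} to control the main part, one expects to arrive at $a_{m}(\Phi_{pqr})=\varepsilon_{m}+\delta_{m}$ with $\varepsilon_{m}$ running over the coefficient pattern of $Q_{\{p',q,r\}}$ (whose height is $h$) and the boundary contribution $\delta_{m}\in\{-1,0,1\}$; hence $|a_{m}(\Phi_{pqr})|\le h+1$ for every $m$.

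For the reverse inequality $A(\Phi_{pqr})\ge h$ I would produce, exactly as in the proof of \refL{1}, a ``central'' index $m_{0}$ with $|\varepsilon_{m_{0}}|=h$ situated away from the boundary strip above, so that $\delta_{m_{0}}=0$ (or at least does not kill $\varepsilon_{m_{0}}$); such an $m_{0}$ exists precisely because of the congruence $p\equiv p'\pmod{qr}$. Combining the two bounds gives $A(\Phi_{pqr})\in\{h,h+1\}$, which is \eqref{b2}. The crux --- and the step I expect to be by far the hardest --- is the middle one: showing that the discrepancy between the dilation by $p$ and the dilation by $p'$ really is confined to a single strip and perturbs each coefficient by at most $1$. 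This is the point where the congruences on $q$ and $r$ have to be used inside the lattice-point estimate, not merely through $A(Q_{\{p',q,r\}})=h$, and where the full strength of the combinatorial identities of \cite{Ba1} (suitably extended to composite $p'$) is likely to be needed.
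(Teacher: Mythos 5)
There is a fundamental mismatch between what you prove and what the statement asserts. The statement is the paper's \emph{Conjecture}, which the paper does not prove at all (it is supported only by a heuristic): for a \emph{fixed} odd prime $p$, every height $h$ with $1\le h\le M(p)$ should be attained \emph{exactly}, i.e.\ $A(pqr)=h$ for some primes $q,r$. Your argument instead targets Theorem 1: you set $p'=2h-1$, impose the congruences \eqref{b1}, and aim at the conclusion \eqref{b2}, namely $A(pqr)=h$ or $h+1$. Even if every step of your sketch were carried out, this would not touch the Conjecture, for two reasons. First, in your construction the modulus $p'$ is determined by the target $h$ and the prime $p$ is then chosen in a residue class modulo $qr$; nothing lets you fix $p$ in advance and sweep $h$ through all values $1,\dots,M(p)$ while varying only $q$ and $r$. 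Second, your conclusion is only the two-valued statement ``$h$ or $h+1$,'' whereas the Conjecture demands exact attainment of each intermediate height; closing that gap is precisely the open problem, and your boundary-strip analysis gives no mechanism for deciding which of the two values occurs.

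Separately, even read as a proof of Theorem 1, your middle step is not a proof but a plan: the claim that the discrepancy between dilation by $p$ and by $p'$ perturbs each coefficient of the quotient by at most $1$ is exactly the Bachman--Moree inequality \eqref{c3} of \refL{3} (from \cite{BM}), which is the hard published ingredient; your lattice-point sketch acknowledges this (``one expects to arrive at\dots'') without supplying it. The paper's own route to Theorem 1 is simply \refL{4} (giving $A(p',q,r)=h$ for composite $p'$ as well) combined with \eqref{c3} applied to $p\equiv p'\pmod{qr}$; you are in effect re-deriving \eqref{c3} from scratch while also needing the extension of \refL{1} to composite $p'$, which is \refL{4}. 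So the proposal neither proves the stated Conjecture nor, as written, completes the easier Theorem 1.
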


\section{Ternary Inclusion-Exclusion Polynomials}

The class of inclusion-exclusion polynomials was defined in \cite{Ba3}. These polynomials may be thought of as a tool for studying coefficients of cyclotomic polynomials and other divisors of polynomials $x^n-1$. In the present paper we are concerned only with ternary cyclotomic polynomials and, correspondingly, we will reference only ternary inclusion-exclusion polynomials defined below. In this section we let the triple of letters $p,q,r$ denote any three integers greater than 2 and relatively prime in pairs (\emph{they need not be primes}). For any such triple of parameters $\{p,q,r\}$, the ternary inclusion-exclusion polynomial $Q_{\{p,q,r\}}(x)$ is defined by the expression
\begin{equation}\label{c1}
Q_{\{p,q,r\}}(x)\coloneqq\frac{(x^{pqr}-1)(x^p-1)(x^q-1)(x^r-1)}{(x^{pq}-1)(x^{qr}-1)(x^{rp}-1)(x-1)}.
\end{equation}
This polynomial has degree
\[
\gf(p,q,r)\coloneqq(p-1)(q-1)(r-1)
\]
and we write
\[
Q_{\{p,q,r\}}(x)=\sum_{0\le m\le\gf(p,q,r)} a_mx^m \qquad [a_m=a_m(\{p,q,r\})]
\]
and
\[
A(p,q,r)=A(\{p,q,r\})\coloneqq\max_m |a_m(\{p,q,r\})|.
\]
Restricting parameters $p,q,r$ to be (distinct) prime numbers yields cyclotomic polynomials $\{\Phi_{pqr}\}$ as a subclass of $\{Q_{\{p,q,r\}}\}$. For these and other basic properties of inclusion-exclusion polynomials the reader is referred to \cite{Ba3}. Representation \eqref{c1} is the usual starting point for the investigation of coefficients of $\Phi_{pqr}$/$Q_{\{p,q,r\}}$.

\begin{lemma}\label{L3}
Suppose that $r\equiv s\pmod{pq}$. Then we have
\begin{gather}
r,s>\max(p,q) \implies A(p,q,r)=A(p,q,s), \label{c2}\\
r>\max(p,q)>s\ge1 \implies A(p,q,s)\le A(p,q,r)\le A(p,q,s)+1. \label{c3}
\end{gather}
When $s=1$ or $2$ in \eqref{c3}, by convention, $A(p,q,1)=0$ and $A(p,q,2)=1$.
\end{lemma}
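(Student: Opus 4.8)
The plan is to work directly from the product representation \eqref{c1} and study how $Q_{\{p,q,r\}}$ changes when $r$ is replaced by $r+pq$. Writing $\Psi(x) = (x^p-1)(x^q-1)/\bigl((x^{pq}-1)(x-1)\bigr)$, one checks that
\[
Q_{\{p,q,r\}}(x) = \Psi(x)\cdot\frac{(x^{pqr}-1)(x^r-1)}{(x^{qr}-1)(x^{rp}-1)}.
\]
Since $\Psi(x)$ depends only on $p,q$, and since the coefficients we care about are those of a polynomial of degree $\gf(p,q,r)=(p-1)(q-1)(r-1)$, the standard move here is to compute everything modulo $x^N$ for a suitable $N$. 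Indeed, $A(p,q,r)$ is determined by the low-degree coefficients: by the well-known palindrome/antipalindrome symmetry of inclusion-exclusion polynomials (cf.\ \cite{Ba3}), $a_m = \pm a_{\gf(p,q,r)-m}$, so it suffices to know the coefficients of $Q_{\{p,q,r\}}(x)$ of degree at most $\gf(p,q,r)/2$. The first step, then, is to fix a threshold $N$ with $N > \gf(p,q,r)/2$ that is \emph{independent of} $r$ once $r>\max(p,q)$ — for instance $N = pq$ works once one observes that the "interesting" coefficients of ternary inclusion-exclusion polynomials already stabilize below $pq$ — and to show that modulo $x^N$ the power series expansion of $Q_{\{p,q,r\}}(x)$ depends on $r$ only through $r \bmod pq$. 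Concretely, expand $1/(x^{qr}-1)$, $1/(x^{rp}-1)$, and $(x^{pqr}-1)$ as power series (or rational-function approximations) and note that each term either has exponent $\ge N$ (so is killed mod $x^N$) or has exponent depending on $r$ only via $r \bmod pq$. This yields \eqref{c2} immediately: if $r,s>\max(p,q)$ and $r\equiv s\pmod{pq}$, the truncations agree, hence $A(p,q,r)=A(p,q,s)$.

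For \eqref{c3} the point is that when $s<\max(p,q)$ the polynomial $Q_{\{p,q,s\}}$ has \emph{smaller degree} $\gf(p,q,s)$, and its "low half" no longer reaches up to $N$; equivalently, one compares $Q_{\{p,q,r\}}$ and $Q_{\{p,q,s\}}$ as power series and finds they agree up to degree roughly $\gf(p,q,s)/2$ but then the contribution from the factor $(x^r-1)/\bigl((x^{qr}-1)(x^{rp}-1)\bigr)(x^{pqr}-1)$ versus $(x^s-1)/\bigl((x^{qs}-1)(x^{rp... }$ — more carefully, the \emph{difference} $Q_{\{p,q,r\}}(x)-Q_{\{p,q,s\}}(x)$, when expanded as a power series, is divisible by $\Psi(x)$ times something whose coefficients lie in $\{-1,0,1\}$ in the relevant range, while $\Psi(x)$ itself has all coefficients in $\{0,1\}$ (it is, up to the change of variables, a "binary" object). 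Summing, the difference of any two corresponding coefficients is bounded by $1$ in absolute value, which gives $|a_m(\{p,q,r\})-a_m(\{p,q,s\})|\le 1$ for all $m$, and hence $A(p,q,s)-1\le A(p,q,r)\le A(p,q,s)+1$. The lower bound in \eqref{c3}, $A(p,q,s)\le A(p,q,r)$, should come for free from the preceding: the coefficients of $Q_{\{p,q,s\}}$ appear, possibly shifted, as a "sub-sum" inside those of $Q_{\{p,q,r\}}$, so no cancellation can make the max strictly smaller — one has to rule out the losing case. The boundary conventions $A(p,q,1)=0$, $A(p,q,2)=1$ are checked by hand: $Q_{\{p,q,1\}}(x)$ is identically $\dots$ of degree $0$ (the formula degenerates), and $Q_{\{p,q,2\}}(x)=\Phi_{2pq}$-type object with all coefficients $\pm1$.

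The main obstacle I anticipate is making the "difference has coefficients bounded by $1$" claim fully rigorous, i.e., pinning down exactly which power-series identity expresses $Q_{\{p,q,r\}}-Q_{\{p,q,s\}}$ in the range $m\le \gf(p,q,s)/2+O(\max(p,q))$ as $\Psi(x)$ times a polynomial with $0/1$ (or $-1/0/1$) coefficients. This is where one must be careful about which of the four denominator factors in \eqref{c1} actually contribute below the truncation threshold and which are "invisible"; getting the bookkeeping of exponents right — in particular verifying that $pqr$, $qr$, $rp$ are all large enough that those factors act trivially mod $x^N$ on the relevant half — is the technical heart. A clean way to organize it is to first prove a purely formal lemma: if $F(x)=\Psi(x)\cdot G(x)$ where $G$ is a power series with coefficients in $\{-1,0,1\}$ and $\Psi$ has coefficients in $\{0,1\}$ with $\Psi(1)$ bounded appropriately, then... — but in fact $\Psi(1)$ is not bounded, so instead one exploits that $\Psi(x)=\prod(1-x^{?})$-type telescoping keeps partial sums bounded. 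I would isolate that combinatorial fact and then feed in the exponent bookkeeping. Everything else is routine manipulation of the rational function \eqref{c1}.
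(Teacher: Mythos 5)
Note first that the paper offers no proof of this lemma: \eqref{c2} is quoted from Kaplan \cite{Ka1} (extended to inclusion--exclusion polynomials in \cite{Ba3}) and \eqref{c3} is the main theorem of Bachman and Moree \cite{BM}. So you are attempting to reconstruct two published results, and your sketch has a genuine gap at its core. For \eqref{c2}, the truncation idea does not reach far enough. By the palindromic symmetry you must control coefficients up to degree $\varphi(p,q,r)/2\approx pqr/2$, but expanding \eqref{c1} as a power series, the factors $(x^{pqr}-1)$, $1/(x^{qr}-1)$, $1/(x^{rp}-1)$ only act trivially below degree $\min(qr,rp)$, which (for $p<q<r$, say) is about $pr$ and is far smaller than $pqr/2$. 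In that low range one does indeed get $(1-x^r)\cdot(1-x^p)(1-x^q)/\bigl((1-x)(1-x^{pq})\bigr)$, whose coefficients depend on $r$ only through $r\bmod pq$; but this says nothing about the middle coefficients, where the height may well live. Your fallback claim that ``the interesting coefficients stabilize below $pq$'' (i.e.\ that $N=pq$ suffices) is not a known fact, is nowhere justified, and is essentially equivalent to what needs proving. Kaplan's actual argument is not a truncation: it is an exact identity, valid for every degree $m$, expressing $a_m(\{p,q,r\})$ as a signed sum of coefficients of the binary polynomial at positions governed by $r\bmod pq$, from which one gets a degree-by-degree matching between the coefficients of $Q_{\{p,q,r\}}$ and $Q_{\{p,q,s\}}$.

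The situation for \eqref{c3} is worse: both inequalities constitute the content of an entire paper \cite{BM}, and your sketch replaces them with two unsubstantiated assertions. The claim that $Q_{\{p,q,r\}}-Q_{\{p,q,s\}}$ is $\Psi(x)$ times a series with coefficients in $\{-1,0,1\}$ would not, even if true, bound the coefficientwise difference by $1$, since $\Psi$ has unboundedly many nonzero coefficients --- a problem you notice yourself but do not resolve. And the lower bound $A(p,q,s)\le A(p,q,r)$ emphatically does not ``come for free'': a coefficient of $Q_{\{p,q,s\}}$ appearing as a sub-sum inside a coefficient of $Q_{\{p,q,r\}}$ can be cancelled by the other terms, and ruling this out is precisely the hard part. (A small additional point: by \eqref{c1}, $Q_{\{p,q,1\}}\equiv1$, whose height is naturally $1$; the value $A(p,q,1)=0$ in the lemma is a genuine convention, chosen so that \eqref{c3} is consistent with Kaplan's result that $r\equiv\pm1\pmod{pq}$ forces height $1$, not something one verifies from the formula.) As it stands, the proposal proves neither implication; to make it honest you would either cite \cite{Ka1,Ba3,BM} as the paper does, or reproduce Kaplan's identity and the Bachman--Moree analysis in full.
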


The first of these implications was established by Kaplan \cite{Ka1}. Even though he worked in the setting of cyclotomic polynomials, the result holds more generally (as stated) \cite{Ba3}. To avoid repeating this comment again, we will simply state true facts about inclusion-exclusion polynomials even though they may have been formally proven for cyclotomic polynomials only. The more challenging implication \eqref{c3} was proven by Bachman and Moree in \cite{BM}. The result is sharp and both of the two possible equalities do occur.

Lemmas 1 and 2 also fully generalize to the setting of inclusion-exclusion polynomials. We will only use the first of these below, so let us state it explicitly and in a stronger form permitted by \eqref{c2}.

\begin{lemma}\label{L4}
Let $p$ be an odd number, let $q$ be of the form $q=2+(2k+1)p$, for some $k\ge0$, and let $r$ be of the form $r=\frac{pq+1}2+lpq$, for some $l\ge0$. Then $A(p,q,r)=\frac{p+1}2$.
\end{lemma}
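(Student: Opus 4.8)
The plan is to read off \refL{4} as the inclusion-exclusion analogue of \refL{1}, extended from the range $r>pq$ to all admissible $r$ by means of the implication \eqref{c2} of \refL{3}. The first step is bookkeeping: I would observe that prescribing $q=2+(2k+1)p$ with $p$ odd is the same as asking that $q\equiv2\pmod p$, that $q$ be odd, and that $q>p$ (indeed $q\ge p+2$); the oddness of $q$ is exactly what makes $\frac{pq+1}{2}$ an integer, so that prescribing $r=\frac{pq+1}{2}+lpq$ is the same as asking $r\equiv\frac{pq+1}{2}\pmod{pq}$ and $r\ge\frac{pq+1}{2}$. One then checks that $\{p,q,r\}$ is an admissible parameter triple: $(p,q)=1$ because $q\equiv2\pmod p$ and $p$ is odd, while $2r\equiv pq(2l+1)+1\equiv1\pmod{pq}$ together with $pq$ odd gives $(r,pq)=1$; and the standing convention of this section ($p,q,r>2$, so $p\ge3$) yields $q>p$ and $r\ge\frac{pq+1}{2}>q=\max(p,q)$.

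Next I would reduce to the case covered directly by \refL{1}. Put $r^{*}=\frac{pq+1}{2}+pq$, so that $r^{*}\equiv r\pmod{pq}$ and both $r$ and $r^{*}$ exceed $\max(p,q)$; then \eqref{c2} gives $A(p,q,r)=A(p,q,r^{*})$. Since $r^{*}>pq$, $q\equiv2\pmod p$ with $q>p$, and $r^{*}\equiv\frac{pq+1}{2}\pmod{pq}$, the inclusion-exclusion form of \refL{1}---which, as noted in the text preceding the lemma, holds with no primality hypothesis---applies to $\{p,q,r^{*}\}$ and gives $A(p,q,r^{*})=\frac{p+1}{2}$. Chaining the two equalities proves the lemma. (Alternatively: for $l\ge1$ one has $r>pq$ and \refL{1} applies to $\{p,q,r\}$ outright, so only the single borderline case $l=0$, where $r=\frac{pq+1}{2}<pq$ but still $r>\max(p,q)$, truly needs \eqref{c2}.)

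I do not anticipate a real obstacle: the arithmetic content---that $q\equiv2\pmod p$ and $r\equiv\frac{pq\pm1}{2}\pmod{pq}$ with $r$ large force $A(p,q,r)=\frac{p+1}{2}$---is precisely \refL{1} together with its inclusion-exclusion generalization, which we are entitled to invoke, and the step from there to \refL{4} is the one-line use of \eqref{c2} above. If a self-contained treatment were wanted, the only place demanding genuine work would be reproving that generalization (Bachman's evaluation for ternary inclusion-exclusion polynomials); everything else is the verification in the first paragraph. The one point worth flagging is the borderline configuration $k=l=0$, where $r=\frac{pq+1}{2}$ is strictly smaller than $pq$---this is exactly the case in which the appeal to \eqref{c2} is essential rather than decorative, and where the hypothesis $p\ge3$ is used to keep $r$ above $\max(p,q)$.
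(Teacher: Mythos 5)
Your proposal is correct and follows essentially the same route as the paper: the paper offers no separate argument for \refL{4}, presenting it precisely as the inclusion-exclusion generalization of \refL{1} ``stated in a stronger form permitted by \eqref{c2}'', which is exactly your reduction of the case $r=\frac{pq+1}2+lpq\le pq$ to $r^*>pq$ via \eqref{c2}. Your preliminary verifications (oddness of $q$, integrality of $\frac{pq+1}2$, pairwise coprimality, and $r>\max(p,q)$) are accurate and simply make explicit what the paper leaves tacit.
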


\begin{proof}[Proof of Theorem 1]
The proof of Theorem 1 is now immediate. Given $h$, let $p'=2h-1$ and let primes $p,\ q$ and $r$ satisfy \eqref{b1}. \refL{4} gives $A(p',q,r)=h$, from which \eqref{b2} follows by an application of \eqref{c3} in the form
\[
A(q,r,p')\le A(q,r,p)\le A(q,r,p')+1.
\]
\end{proof}

Our argument raises several interesting points and gives a strong heuristic support for our basic claim.

\begin{heuristics} Let us consider the possibility that some $h\ge2$ is never a height of a cyclotomic polynomial. By \refL{4}, there are inclusion-exclusion polynomials with $A(2h-1,q,r)=h$, for infinitely many choices of prime numbers $q$ and $r$. (Parenthetically, using \refL{2} adds many more options for generating inclusion-exclusion polynomials of height $h$.) And for each such polynomial there must always be a jump in height of the form
\begin{equation}\label{c4}
A(2h-1+qr,q,r)=h+1,
\end{equation}
for otherwise, by \eqref{c2}, there would be a prime $p$ with $A(p,q,r)=h$, contradicting our assumptions. Applying the same reasoning with $h-1$ in place of $h$ shows that no such jump ever happens:
\[
A(2h-3,q,r)=h-1\implies A(2h-3+qr,q,r)=h-1.
\]
Since there is no perceptible difference between $h$ and $h-1$, \emph{always} jumping \eqref{c4} for one and \emph{never} for the other is surely not reasonable.
\end{heuristics}

Another interesting question is whether jumps \eqref{c4} can be used to sequentially generate all of $\mathbb N$. More precisely, define a recurrence relation
\[
(p_{i+1},q_{i+1},r_{i+1})=(q_i,r_i,p_i+q_ir_i) \qquad[i\ge1],
\]
for any initial triple of relatively prime in pairs integers $2<p_1<q_1<r_1$, and consider the sequence of heights $\{A(p_i,q_i,r_i)\}_{i=1}^\infty$ thus generated. By \eqref{c3},
\[
A(p_{i+1},q_{i+1},r_{i+1})-A(p_i,q_i,r_i) = 0\text{ or } 1,
\]
whence the set of generated heights $h$ is either all integers starting with $A(p_1,q_1,r_1)$, or all integers in some interval $A(p_1,q_1,r_1)\le h\le H$. In the latter case $A(p_i,q_i,r_i)=H$ for all $i\ge i_0$. It is natural to ask whether there is a choice of $(p_1,q_1,r_1)$ such that 
\[
\{A(p_i,q_i,r_i)\}=\mathbb N.
\]
Notice that if any of these height sequences are bounded, that means that there are initial conditions $A(p_1,q_1,r_1)$ resulting in the constant sequence
\[
A(p_i,q_i,r_i)\equiv A(p_1,q_1,r_1).
\]
It is not at all clear whether any such sequences exist.

Given the relative novelty of inclusion-exclusion polynomials it might be beneficial to recast the preceding question in the more familiar (but more restrictive) setting of cyclotomic polynomials. We do this by way of an example. Extensive list of heights of cyclotomic polynomials have been compiled by Arnold and Monagan \cite{AM}. Consulting their list we find that the ``first'' ternary $\Phi_{pqr}$ of height 1 correspond to the triple of primes 3, 7, 11, that is, $A(3\cdot7\cdot11)=1$. The smallest ``next'' prime $\equiv3\pmod{7\cdot11}$ is 157 and $A(7\cdot11\cdot157)=2$ (according to the list). Continuing this scheme generates the sequence
\[
A(3\cdot7\cdot11)=1,\, A(7\cdot11\cdot157)=2,\, A(11\cdot157\cdot3461),\,\dots
\]
of heights of the corresponding cyclotomic polynomials, and the question is whether every positive integer occurs as an element of this sequence.

\section{Proof of the Corollary}

Recall that $\cp$ denotes a subset of prime numbers and $P(x)$ denotes the number of elements of $\cp$ not exceeding $x$. We will also write $a\pmod m$ to denote the arithmetic progression $a, a+m, a+2m,\dots$, and $|\cp\cap a\pmod m|=\infty$ indicates that infinitely many elements of $\cp$ are in $a\pmod m$.

\begin{lemma}\label{L5}
Suppose that a set of primes $\cp$ satisfies the following properties.
\begin{enumerate}
\item[(P1)] If $m$ is an arbitrary odd number, then $|\cp\cap2\pmod m|=\infty$.
\item[(P2)] If $m$ is an arbitrary odd number, then $|\cp\cap\frac{m+1}2\pmod m|=\infty$.
\item[(P3)] For an arbitrary $a\in\mathbb N$, if distinct $q,r\in\cp$ are sufficiently large in terms of $a$, say $q,r\ge C_a$, then $|\cp\cap a\pmod{qr}|=\infty$.
\end{enumerate}
Then for each positive integer $h$, there are arbitrarily large triples of primes $p,q,r\in\cp$ for which the conclusion \eqref{b2} of Theorem 1 holds.
\end{lemma}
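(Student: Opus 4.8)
The plan is to show that properties (P1)--(P3) are exactly what is needed to run the argument behind Theorem 1 while keeping all three primes inside $\cp$ and arbitrarily large. Fix a positive integer $h$ and set $p'=2h-1$, an odd number. We must produce primes $p,q,r\in\cp$, as large as we please, satisfying the chain of congruences \eqref{b1}, namely $q\equiv2\pmod{p'}$, $r\equiv\frac{p'q+1}2\pmod{p'q}$, and $p\equiv p'\pmod{qr}$; once this is done, \refT{1} (whose proof only used \refL{4} and \eqref{c3}, both of which are stated for inclusion-exclusion polynomials and hence apply to primes) immediately gives \eqref{b2}.

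First I would pick $q$. Apply (P1) with $m=p'$: infinitely many primes of $\cp$ lie in $2\pmod{p'}$, so choose such a $q\in\cp$ with $q$ larger than any prescribed bound and also $q\ge C_{p'}$ (the constant from (P3)); since $q\equiv2\pmod{p'}$ and $p'$ is odd, $q$ is odd and coprime to $p'$. Second, I would pick $r$. The modulus $p'q$ is odd (product of two odd numbers) and coprime to nothing problematic, so $\frac{p'q+1}2$ is an integer and the residue class $\frac{p'q+1}2\pmod{p'q}$ makes sense; apply (P2) with $m=p'q$ to get infinitely many primes of $\cp$ in that class, and choose such an $r\in\cp$ with $r$ larger than any prescribed bound and $r\ge C_{p'}$, $r\ne q$. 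Note $r\equiv\frac{p'q+1}2\pmod{p'q}$ forces $r$ coprime to $p'q$, hence $p,q,r$ will be pairwise coprime once $p$ is chosen coprime to $qr$. Third, I would pick $p$. Now $q,r\in\cp$ are distinct primes both $\ge C_{p'}$, so (P3) with $a=p'$ applies and yields infinitely many primes of $\cp$ congruent to $p'\pmod{qr}$; choose such a $p\in\cp$ exceeding any prescribed bound. Then $p\equiv p'\pmod{qr}$ with $p'<qr$ (as $p'$ is fixed and $q,r$ are huge), and $\gcd(p',qr)=1$ since $q,r\nmid p'$, so $\gcd(p,qr)=1$; thus $p,q,r$ are three distinct primes, pairwise coprime, all in $\cp$, and all arbitrarily large, satisfying \eqref{b1}.

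Having verified \eqref{b1}, the conclusion \eqref{b2} follows verbatim from the proof of \refT{1}: \refL{4} gives $A(p',q,r)=\frac{p'+1}2=h$, and \eqref{c3} (with the triple $\{q,r,\cdot\}$, comparing $s=p'<\max(q,r)$ against $p\equiv p'\pmod{qr}$ with $p>\max(q,r)$) gives $A(q,r,p')\le A(q,r,p)\le A(q,r,p')+1$, i.e.\ $A(pqr)=h$ or $h+1$. Since at each of the three selection steps we were free to take the relevant prime as large as we wish, the triple $(p,q,r)$ can be made arbitrarily large, which is the assertion of the lemma.

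The only genuine subtlety, and the one I would be most careful about, is the bookkeeping that makes the three applications of (P1)--(P3) chain together correctly: (P3) is conditional on $q,r$ being large enough in terms of $a=p'$, so $q$ and $r$ must be chosen $\ge C_{p'}$ \emph{before} invoking (P3), and one must also confirm at the end that the resulting $p,q,r$ are genuinely distinct and pairwise coprime so that $Q_{\{p,q,r\}}=\Phi_{pqr}$ and the earlier lemmas apply. All of this is routine divisibility checking — $p'$ odd forces $q$ odd, $p'q$ odd forces $r$ odd, and $p'<qr$ together with $q,r\nmid p'$ forces $\gcd(p,qr)=1$ — but it is where the proof actually has content beyond quoting \refT{1}. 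Constructing an explicit $\cp$ with (P1)--(P3) and $P(x)<\log x$ is then the separate matter handled in proving the Corollary itself.
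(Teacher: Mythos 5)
Your proposal is correct and follows essentially the same route as the paper's own proof: pick $q$ via (P1) with $m=p'$, then $r$ via (P2) with $m=p'q$ (both at least $C_{p'}$), then $p$ via (P3) with $a=p'$, and conclude by Theorem 1. The extra coprimality and bookkeeping checks you include are fine but not needed beyond what the paper's two-line argument already records.
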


\begin{proof}
Given a target height $h$, put $p'=2h-1$ and, one by one, select primes $q,\ r$, and $p$ in $\cp$ satisfying $q\equiv2\pmod{p'},\ q>C_{p'}$, $r\equiv\frac{p'q+1}2\pmod{p'q},\ r>C_{p'}$, and $p\equiv p'\pmod{qr}$. These elements of $\cp$ satisfy the conditions \eqref{b1} and the conclusion \eqref{b2} follows.
\end{proof}

To complete the proof of Corollary 1, we will show that there is a set of primes $\cp$ satisfying (P1)-(P3) and having $P(x)<\log x$. Let us introduce the notation
\[
\overline{2n+1}\coloneqq1\cdot3\cdot\ldots\cdot(2n+1)\qquad[n\ge1].
\]
We begin by specifying two sequences of primes $\{q_n\}$ and $\{r_n\}$ as follows. We thake $q_1=5$ and for each $n\ge2$ starting with $n=2$, we fix a prime number $q_n$ satisfying
\[
q_n\equiv2\pmod{\overline{2n+1}}\quad\text{and}\quad q_n>q_{n-1}.
\]
Similarly, we take $r_1=5$ and for each $n\ge2$, we fix a prime $r_n$ satisfying
\begin{equation}\label{d1}
r_n\equiv\frac{\overline{2n+1}+1}2\pmod{\overline{2n+1}}\quad\text{and}\quad r_n>\overline{2n+1}, r_{n-1}.
\end{equation}
Note that both of the specified selections are possible by Dirichlet's theorem. In the case of \eqref{d1}, we note that $\gcd(\frac{\overline{2n+1}+1}2,\overline{2n+1})=1$. Now, for every odd number $m$, $m\mid\overline{2n+1}$ for all sufficiently large $n$. For each such $n$, we have $q_n\equiv2\pmod m$ and
\[
r_n\equiv\frac{\overline{2n+1}+1}2\equiv\frac{m+1}2\pmod m.
\]
It follows that properties (P1) and (P2) are satisfied by any set $\cp$ containing primes $\{q_n\}$ and $\{r_n\}$ as subsets.

To estimate the number of elements in $\{q_n\}\cup\{r_n\}$ not exceeding $x$, we simply observe that
\[
q_n,r_n>\overline{2n+1}>n!>81^n\qquad[n\ge n_0],
\]
whence $q_n,r_n>x$ for  $n\ge\frac14\log x$ . Therefore, we have
\begin{equation}\label{d2}
\#\{\,q_n,r_n : q_n, r_n\le x\,\}<\frac12\log x\qquad[x\ge x_0].
\end{equation}

Next we address (P3) of \refL{5}. To do this we construct, for each $a$, a sequence of primes $\{p_n(a)\}$ as follows. In our construction we use the notation $\pi_j$ to denote the $j$th prime number ($\pi_1=2, \pi_2=3, \dots$) and put
\[
P(i,j)\coloneqq\pi_i\pi_{i+1}\dots\pi_j\qquad[i<j].
\]
Given $a$, let $\pi_k$ be the smallest odd prime $>a^a$ ($a=1\mapsto\pi_2=3$) and let:
\[
p_1(a) : \quad\text{smallest prime} \equiv a\pmod{\pi_k}\quad \text{and}\quad >\pi_k^3,
\]
and then successively, for $n\ge1$,
\[
p_{n+1}(a) : \quad\text{smallest prime} \equiv a\pmod{P(k,k+n)}\quad \text{and}\quad >p_n(a)^3,
\]
The resulting sequence $\{p_n(a)\}$ has the property that for any two primes $q,r>a^a$, and so $q,r\ge\pi_k$, we have $p_n(a)\equiv a\pmod{qr}$ for all sufficiently large $n$. It follows that any set $\cp$ containing $\cup_a\{p_n(a)\}$ is guaranteed to satisfy (P3) of \refL5.

To estimate the number of elements in $\cup_a\{p_n(a)\}$ not exceeding $x$, we note that for all sufficiently large $x$ we have
\[
a\ge\frac12\frac{\log x}{\log\log x}\implies p_1(a)>a^{3a}>x,
\]
and
\[
n\ge\log\log x\implies p_n(a)>x,
\]
since $p_n(a)\ge p_1(a)^{3^{n-1}}>3^{3^n}$. Therefore, we have
\begin{equation}\label{d3}
\#\{\,p_n(a) : p_n(a)\le x\,\}<\frac12\log x\qquad[x\ge x_0].
\end{equation}

Finally, forming the union
\[
\cp=\{q_n\}\cup\{r_n\}\cup\bigl(\cup_a\{p_n(a)\}\bigr)
\]
creates a set which satisfies (P1)-(P3) of \refL{5}, as was already mentioned, and, by \eqref{d2} and \eqref{d3}, satisfies the desired inequality $P(x)<\log x$, for sufficiently large $x$. Plainly, by removing enough of small elements of $\cp$ we can force this inequality to hold for all $x$.

\end{document}